\newtheorem{theorem}{Theorem}
\newtheorem{algorithm}{Algorithm}
\newtheorem{lemma}{Lemma}
\newtheorem*{symmetry}{Symmetry}
\theoremstyle{definition}
\newtheorem*{definition}{Definition}
\theoremstyle{remark}
\DeclareMathOperator{\EA}{\mathrm{EA}}
\begin{document}

\title{New algorithms for modular inversion and representation by binary quadratic forms arising from structure in the Euclidean algorithm}
\author{Christina Doran, Shen Lu, Barry R. Smith}
\thanks{This work was supported by a grant from The Edward H. Arnold and Jeanne Donlevy Arnold Program for Experiential Education, which supports research experiences for undergraduates at Lebanon Valley College}

\begin{abstract}
We observe structure in the sequences of quotients and remainders of the Euclidean algorithm with two families of inputs.  Analyzing the remainders, we obtain new algorithms for computing modular inverses and representating prime numbers by the binary quadratic form $x^2 + 3xy + y^2$.  The Euclidean algorithm is commenced with inputs from one of the families, and the first remainder less than a predetermined size produces the modular inverse or representation. 
\end{abstract}
\maketitle

\section{The algorithms}

Intuitively, the iterative nature of the Euclidean algorithm makes the sequences of quotients and remainders ``sensitive to initial conditions''.  A small perturbation to the inputs can induce a chain reaction of increasingly large perturbations in the sequence of quotients and remainders, leading to considerable alterations to both the lengths of the sequences and their entries.   Later entries are especially prone to change because of cumulative effects. 

Our first result, Theorem \ref{T:twoEAs}, provides a surprising example of regularity under perturbation.  When $v$ is a solution of the congruence $v^2 + v - 1 \equiv 0 \pmod{u}$, we show that the Euclidean algorithm with $u$ and $v-1$ always takes one step fewer than the Euclidean algorithm with $u$ and $v$.  The sequences of quotients in both cases are almost identical, differing only in their middle one or two entries.  (They are also symmetric outside of those middle entries.)  We also obtain explicit formulas for the remainders of the Euclidean algorithm with $u$ and $v-1$ in terms of the remainders produced by $u$ and $v$. 

From these formulas we obtain a new algorithm for representing prime numbers by the indefinite quadratic form $x^2 + 3xy + y^2$.  When such a representation exists, the algorithm produces one with $x > y > 0$.  Lemma \ref{L:representation} at the end of this section shows this representation is unique.

\begin{algorithm}\label{A:BQFalgorithm}
Let $p$ be a prime number congruent to $1$ or $4$ modulo $5$.  To compute the unique representation $p = b^2 + 3bc + c^2$ with $b > c > 0$, first compute a solution $v$ to the congruence $ v^2 + v - 1 \equiv 0 \pmod{p}$, then perform the Euclidean algorithm with $p$ and $v$. The first remainder less than $\sqrt{p/5}$ is $c$, and the remainder just preceding is either $b$ or $b+c$.
\end{algorithm}

 This algorithm is similar to earlier algorithms that use the Euclidean algorithm to produce representations by binary quadratic forms \cite{jB1972,gC1908,HMW1990,HMW1990b,kM2002,pW1980}.  Matthew's \cite{kM2002} is the only of these to produce representations by forms with positive discriminant, namely, the forms  $x^2 - wy^2$ with $w = 2$, $3$, $5$, $6$, or $7$.  The algorithm we present is a new contribution to this body of work.

We study a second family of inputs to the Euclidean algorithm, pairs $u > v$ for which $(v \pm 1)^2 \equiv 0 \pmod{u}$. This condition implies that there must exist $a$, $b$, and $c$ with $u = ab^2$ and $v = abc \pm 1$.  Theorem \ref{T:theorem1}  and Theorem \ref{T:theorem4} give an explicit description of the quotients and remainders of the Euclidean algorithm with $u$ and $v$ in terms of the quotients and remainders of the Euclidean algorithm with $b$ and $c$.  

The relationship between the quotients of the Euclidean algorithm with $b$ and $c$ and with $ab^2$ and $abc \pm 1$ is essentially the ``folding lemma'' for continued fractions, first explicated independently in \cite{mF1973} and \cite{jS1979}.  This lemma has inspired a significant body of work concerning the quotients of continued fractions.   These works give attention only to continued fractions -- the remainders in the Euclidean algorithm are never explicitly considered. The description of the entire Euclidean algorithm with $ab^2$ and $abc \pm 1$ in Theorems \ref{T:theorem1} and \ref{T:theorem4} is new. They are unified by Theorem \ref{T:degenerate}, which arithmetically characterizes the quotient pattern that will appear in the Euclidean algorithm with $u$ and $v$ when $(v \pm 1)^2 \equiv 0 \pmod{u}$.

Analysis of the remainders leads to another new algorithm, this time for modular inversion. 

\begin{algorithm}\label{A:inverse}
If $m$ and $n$ are relatively prime positive integers, then the multiplicative inverse of $m$ modulo $n$ is the first remainder less than $n$ when the Euclidean algorithm is performed with $n^2$ and $mn+1$.  
\end{algorithm}

A similar algorithm was obtained by Seysen \cite{mS2005}.  In his algorithm, an integer $f$ is arbitrarily chosen with $f > 2n$, and the Euclidean algorithm is run with $fn$ and $fm+1$.  The algorithm is stopped at the first remainder $r$ less than $f+n$, and the modular inverse of $m$ modulo $n$ is then $r-f$ (which can be negative).  If $f$ were allowed to equal $n$, then this would be similar indeed to the algorithm above.  However, Seysen's algorithm does not work generally in this case.  For instance, with $n=12$ and $m=5$, Seysen's algorithm with $f=12$ would say to run the Euclidean algorithm with $144$ and $61$, stopping at the first remainder less than $24$.  This remainder is $22$, and Seysen's algorithm would output $10$, which is not an inverse for $5$ modulo $12$.  Our algorithm above instead produces the inverse $5$. 

The inputs to Algorithm \ref{A:inverse} are less than half the size of the inputs to Seysen's.  But Seysen's algorithm has the flexibility arising from choosing the factor $f$.  It would be interesting to see if both algorithms can fit in a common framework.

Our results are a new contribution to the literature on algorithmic number theory, but we believe the modular inversion algorithm also has pedagogical value.  Students are less prone to mistakes working by hand with the new algorithm rather than the extended Euclidean algorithm or Blankinship's matrix algorithm \cite{wB1963}.   The new algorithm might seem non-intuitive, but our proof is elementary and is an amalgam of topics encountered by a student learning formal reasoning: the Euclidean algorithm, congruences, and mathematical induction.  

We conclude this section with the result guaranteeing the uniqueness of the representation produced by Algorithm \ref{A:BQFalgorithm}.

\begin{lemma}\label{L:representation}
If $p$ is a prime number congruent to $1$ or $4$ modulo $5$, then there is a unique pair of positive integers $b > c$ satisfying
\begin{equation*}
	p = b^2 + 3bc + c^2.
\end{equation*}
\end{lemma}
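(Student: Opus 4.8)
The plan is to connect the quadratic form $x^2 + 3xy + y^2$ to the arithmetic of $\mathbb{Z}[\phi]$, where $\phi = \tfrac{1+\sqrt5}{2}$ is the golden ratio, since this form has discriminant $5$ and is (up to $\mathrm{GL}_2(\mathbb{Z})$-equivalence) the principal form for that discriminant. The substitution $x = b+c$, $y = c$ (or a similar unimodular change of variables) should turn $b^2 + 3bc + c^2$ into the norm form $X^2 + XY - Y^2 = N(X - Y\phi)$ up to sign, so a representation $p = b^2+3bc+c^2$ corresponds to an element of $\mathbb{Z}[\phi]$ of norm $\pm p$. Thus \emph{existence} of some integer representation is equivalent to $p$ splitting in $\mathbb{Z}[\phi]$, which (since $\mathbb{Z}[\phi]$ is a PID with class number one) happens exactly when $5$ is a square mod $p$, i.e. when $p \equiv \pm 1 \pmod 5$ — matching the hypothesis. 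The congruence $v^2 + v - 1 \equiv 0 \pmod p$ is precisely the statement that $\phi$ reduces mod a prime above $p$, which is why it appears in Algorithm \ref{A:BQFalgorithm}.

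First I would record the change of variables explicitly and check it is unimodular, so that integer representations by $x^2+3xy+y^2$ biject with integer representations by the chosen norm form. Second, I would establish existence: factor $(p) = \mathfrak{p}\bar{\mathfrak{p}}$ in $\mathbb{Z}[\phi]$, use the PID property to write $\mathfrak{p} = (\pi)$, and translate $N(\pi) = \pm p$ back through the change of variables to get an integer solution $(b,c)$; adjusting by units (powers of $\phi$, which has norm $-1$, together with $\pm 1$) and by the symmetries of the form, one can force the sign of the norm and the inequalities $b > c > 0$. The slightly delicate bookkeeping here is that the form is indefinite, so its automorphism group is infinite; I must check that within each orbit there is exactly one representative in the region $b > c > 0$.

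For \emph{uniqueness}, suppose $p = b^2+3bc+c^2 = b'^2+3b'c'+c'^2$ with $b>c>0$ and $b'>c'>0$. Passing to $\mathbb{Z}[\phi]$, the corresponding elements $\pi, \pi'$ both generate prime ideals of norm $\pm p$; since there are only two such ideals ($\mathfrak{p}$ and $\bar{\mathfrak{p}}$), $\pi'$ is a unit times $\pi$ or a unit times $\bar\pi$. Conjugation corresponds to a linear involution on $(b,c)$, and multiplication by the fundamental unit corresponds to the generator of the (infinite cyclic, times $\pm1$) automorphism group of the form. So uniqueness reduces to the claim that the orbit of $(b,c)$ under this automorphism group — together with the conjugation involution — meets the region $\{b>c>0\}$ in a single point. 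This is the crux of the argument: I would prove it by a descent/reduction-theory argument, showing that the automorphism $(b,c) \mapsto (b',c')$ strictly increases or strictly decreases $\max(|b|,|c|)$ unless one is already at the unique reduced point, analogous to the standard proof that a positive definite form has a unique reduced representative — except here the indefiniteness forces me to track which "branch" of the automorphism orbit the inequality $b>c>0$ selects.

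An alternative, more self-contained route avoids ideal theory entirely: from two representations, form the product $(b^2+3bc+c^2)(b'^2+3b'c'+c'^2) = p^2$ and use the Brahmagupta-type composition identity for this form to extract a nontrivial factorization-style identity modulo $p$, then combine it with $p$ being prime to force $(b,c)$ and $(b',c')$ to be related by an automorphism, and finish with the same reduction-theory endgame. I expect the main obstacle either way to be exactly that endgame — pinning down that the constraint $b > c > 0$ is a fundamental domain for the form's automorphism group acting on primitive representations of $p$ — rather than the algebraic-number-theory input, which is standard.
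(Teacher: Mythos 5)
Your proposal takes essentially the same route as the paper: pass to the ring of integers of $\mathbb{Q}(\sqrt5)$ via a norm-form identification of $b^2+3bc+c^2$ (discriminant $5$), get existence from splitting plus the PID property with a unit of norm $-1$ to fix the sign, and reduce uniqueness to showing that $b>c>0$ cuts out a fundamental domain for the action of the norm-one unit group together with conjugation. The paper executes the endgame you correctly flag as the crux by choosing the representative of minimal positive trace and tracking the signs of the coordinates of $\alpha\varepsilon^{n}$, which is a concrete instance of the descent you describe.
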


\begin{proof}

We work in the field $\mathbb{Q} (\sqrt{5})$.  The algebraic integers in this field are
\begin{equation*}
	\mathcal{O} =  \{ \, \frac{m}{2} + \frac{n}{2}\sqrt{5} \colon m, n, \in \mathbb{Z}, \, m \equiv n \bmod 2 \, \}.
\end{equation*}
 Denote by $\tau$ the nontrivial automorphism of $\mathbb{Q}(\sqrt{5})$ and by $N$ the norm map $N \gamma = \gamma \gamma^{\tau}$. The unit $\varepsilon = \frac{3}{2} +  \frac{1}{2}\sqrt{5}$ generates the group of units of norm 1 in $\mathbb{Z} \left[ \sqrt{5} \right]$. The map
\begin{equation*}
	(b,c) \mapsto \left( b + \tfrac{3}{2} c \right) + \left( \tfrac{1}{2} c \right) \sqrt{5}
\end{equation*}
gives a bijection between all pairs of integers $(b,c)$ with $b^2 + 3bc + c^2 = p$ and all elements of $\mathcal{O}$ of norm $p$. The condition $b > c > 0$  for a pair with $b^2 + 3bc + c^2 = p$ is equivalent to the corresponding element $\tfrac{x}{2} + \tfrac{y}{2} \sqrt{5}$ of $\mathcal{O}$ satisfying  $x > 5y > 0$.

By quadratic reciprocity, $p$ splits in $\mathbb{Q}(\sqrt{5})$. The ring $\mathcal{O}$ is a principal ideal domain, so we may pick a generator $\gamma$ of one of the prime ideals dividing $p$.  Multiplying $\gamma$ by $\tfrac{1}{2} + \tfrac{1}{2} \sqrt{5}$ if necessary, we may assume $\gamma$  has norm $p$.  

There is therefore at least one algebraic integer with norm $p$ of the form $\tfrac{x}{2} + \tfrac{y}{2} \sqrt{5}$.   Among all such elements, let $\alpha$ be one for which $x$ is positive and is as small as possible (i.e., $\alpha$ has minimal positive trace).  Replacing $\alpha$ by $\alpha^{\tau}$ if necessary, we may assume also that $y$ is positive. The lemma will be proved by showing that $\alpha$  is the unique element $\frac{x}{2} + \frac{y}{2} \sqrt{5}$ in $\mathcal{O}$ with norm $p$ and $x > 5y > 0$.  

Define $a_n$, $b_n$ as the integers for which
\begin{align*}
	\alpha \varepsilon^{n} &= \frac{a_n}{2} + \frac{b_n}{2} \sqrt{5}\\
\end{align*}
Then 
\begin{equation*}
	\left( \alpha \varepsilon^{-1} \right)^{\tau} = \frac{3a_0-5b_0}{4} + \frac{a_0 - 3b_0}{4} \sqrt{5}.
\end{equation*}

If we suppose $a_0 - 3b_0 < 0$, then $\tfrac{5b_0 - 3a_0}{4}  > -\frac{1}{3} a_0$.   If $5b_0 - 3a_0$ were negative, then $\left( \alpha \varepsilon^{-1} \right)^{\tau}$ would have norm $p$ and smaller positive trace than $\alpha$, a contradiction.  Thus, again by our choice of $\alpha$, we have $\tfrac{5b_0 - 3a_0}{2} \geq a_0$, hence $a_0 \leq b_0$.  But then 
\begin{equation*}
	N \alpha = \frac{1}{4} \left( a_0^2 - 5 b_0^2 \right) \leq -b_0^2 < 0,
\end{equation*}
which contradicts the assumption that $\alpha$ has norm $p$.

It must be then that $a_0 - 3b_0 > 0$, and thus, $3a_0 - 5b_0 > 0$.  Again using our assumption on $\alpha$, we have $\tfrac{3a_0 - 5b_0}{2} \geq a_0$.  It follows that $a_0 \geq 5b_0 > 0$ (and, in fact, $a_0 > 5b_0$ since $p \neq 5$).

It remains to show that $\alpha$ is the unique algebraic integer $\frac{x}{2} + \frac{y}{2}\sqrt{5}$ with norm $p$ satisfying $x > 5y > 0$. Suppose $x$ and $y$ are integers and set $\frac{w}{2} + \frac{z}{2}\sqrt{5} = (\frac{x}{2} + \frac{y}{2}\sqrt{5}) \varepsilon$.  It is readily checked that if $x > 0$ and $y > 0$, then $w > 0$ and $z > 0$ and $w < 5z$.  It follows that all for all $n \geq 0$, we have $a_n > 0$ and $b_n > 0$, but $a_n > 5 b_n$ only when $n=0$.  Recall that  $\alpha \varepsilon^{-1}  = \frac{a_{-1}}{2} + \frac{b_{-1}}{2} \sqrt{5}$.  From the above two paragraphs, we have  $a_{-1} > 0$ and $b_{-1} < 0$.   If we set $\frac{w'}{2} + \frac{z'}{2} \sqrt{5} = (\frac{x}{2} + \frac{y}{2} \sqrt{5}) \varepsilon^{-1}$ and if $x > 0$ and $y < 0$, then $w' > 0$ and $y' < 0$.  Thus, $a_n > 0$ and $b_n < 0$ for all $n \leq -1$.

The numbers in $\mathcal{O}$ of norm $p$ are exactly $\pm \frac{a_n}{2} \pm \frac{b_n}{2} \sqrt{5}$ for $n$ in $\mathbb{Z}$.  It follows that the only possible element $\frac{x}{2}+\frac{y}{2} \sqrt{5}$ with norm $p$ and $x > 5y > 0$ other than $\alpha$ is $\frac{a_{-1}}{2} - \frac{b_{-1}}{2} \sqrt{5} = \frac{3a_0 - 5b_0}{4} + \frac{a_0 - 3 b_0}{4} \sqrt{5}$.  But $3 a_0 - 5 b_0 > 5 ( a_0 - 3 b_0)$ implies that $a_0 < 5 b_0$, which we know is not true.  The uniqueness is proved.
\end{proof}

\section{Euclidean algorithm background}\label{S:background}

For positive integers $u > v$, the sequence of equations  of the Euclidean algorithm when commenced by dividing $v$ into $u$ has the form
\begin{align}\label{E:EucAlg}
u &= q_1v+r_1 \notag \\ \notag
v &= q_2r_1+r_2\\ \notag
r_1 &= q_3r_2+r_3\\
&\mathrel{\makebox[\widthof{=}]{\vdots}} \\ \notag
r_{s-3} &=q_{s-1}r_{s-2}+r_{s-1}\\ \notag
r_{s-2} &=q_s r_{s-1} + r_s \notag
\end{align}
with $r_{s-1} = \gcd(u,v)$ and $r_s = 0$.  We define 
\begin{equation*}
	r_{-1}=u \quad \text{ and } \quad r_0=v.
\end{equation*}
Because $r_{s-1} < r_{s-2}$, it follows that $q_s \geq 2$.

Our study of the Euclidean algorithm is streamlined by allowing it to unfold in two different ways.  These parallel the two continued fraction expansions of a rational number.  The expansion of $u/v$ with final quotient $\geq 2$ is the sequence of quotients of the Euclidean algorithm with $u$ and $v$. We will modify the Euclidean algorithm to make it produce the other expansion. If the Euclidean algorithm with $u$ and $v$ is written as \eqref{E:EucAlg}, we replace the final equation by the two equations
\begin{equation}\label{E:EAmodified}
\begin{aligned}
	r_{s-2} &= (q_{s-1} - 1) r_{s-1} + r_{s-1}\\
	r_{s-1} &= 1 \cdot r_{s-1} + 0
\end{aligned}
\end{equation}
This modification changes the parities of the sequences of quotients  and remainders.

\begin{definition}
If $u$ and $v$ are positive integers and $\delta=0$ or $1$, we denote by $\EA(u,v,\delta)$ the sequence  of equations of the Euclidean algorithm when commenced with $u$ and $v$.  When $\delta = 0$, we use whichever of the standard or modified Euclidean algorithms takes an even number of steps,  and when $\delta = 1$, whichever takes an odd number. When considering only the standard algorithm, we write simply $\EA(u,v)$. We denote the $i$th equation by $\EA^i (u,v,\delta)$ or $\EA^i (u,v)$ and call the associated sequences $(q_i)$ and $(r_i)$ the \textbf{sequence of quotients} and \textbf{sequence of remainders}.
\end{definition}

Reasoning about the Euclidean algorithm is facilitated by continuants.  Properties of continuants can be found in Section 6.7 of the book by Graham, Knuth, and Patashnik \cite{GKP1989}.

\begin{definition}
Associated with a sequence $\left[ q_1, \ldots, q_s \right]$ of integers, we define a doubly indexed sequence of \textbf{continuants}
\begin{equation}\label{E:continuantdef}
	\mathfrak{q}_{i,j} = q_i \mathfrak{q}_{i+1, j} + \mathfrak{q}_{i+2,j} \quad \text{ and } \quad \mathfrak{q}_{i+1,i} = 1, \quad \mathfrak{q}_{i+2,i}=0
\end{equation}
for $1 \leq i \leq j+2 \leq s+2$.  When a more explicit description of the $\mathfrak{q}_i$'s is required, we will use the alternate notation (for $i \leq j$):
\begin{equation*}
	\left[ q_i, \ldots, q_j \right] := \mathfrak{q}_{i,j}
\end{equation*}
\end{definition}

The properties of continuants that we will need are the recursion \eqref{E:continuantdef} and the surprising 
\begin{symmetry}
\begin{equation*}
	\left[ q_i, \ldots, q_j \right] = \left[ q_j, \ldots, q_i \right],
\end{equation*}
\end{symmetry}
\noindent which can be proved by induction.  An illuminating combinatorical proof is in \cite{BQ2000}.  From the symmetry of continuants and recurrence \eqref{E:continuantdef} we obtain the alternate recurrence
\begin{equation}\label{E:altrecurrence}
	\mathfrak{q}_{i,j} = q_j \mathfrak{q}_{i,j-1} + \mathfrak{q}_{i,j-2}.
\end{equation}

\begin{lemma}\label{L:endofEA}
Let $u$ and $v$ be relatively prime integers. If $(q_i)_{i=1}^s$ and $(r_i)_{i=-1}^s$ are the sequences of quotients and remainders of $\EA(u,v,\delta)$ and $\mathfrak{q}_{i,j}$ are the continuants corresponding to the sequence of quotients, then
\begin{equation*}
	r_{i} = \mathfrak{q}_{i+2,s}
\end{equation*}
for $i=-1$, \ldots, $s$.  In particular, $u = \mathfrak{q}_{1,s}$ and $v = \mathfrak{q}_{2,s}$.
\end{lemma}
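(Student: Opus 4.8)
The plan is to prove the identity $r_i = \mathfrak{q}_{i+2,s}$ by downward induction on $i$, starting from the bottom of the Euclidean algorithm and climbing up, since the continuant recursion \eqref{E:continuantdef} naturally builds $\mathfrak{q}_{i,s}$ from $\mathfrak{q}_{i+1,s}$ and $\mathfrak{q}_{i+2,s}$ in exactly the same shape as the Euclidean division equations read in reverse.

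\begin{proof}[Proof sketch]
First I would pin down the base cases using the conventions $\mathfrak{q}_{s+1,s}=1$ and $\mathfrak{q}_{s+2,s}=0$ from \eqref{E:continuantdef}. For $\EA(u,v,\delta)$ written as in \eqref{E:EucAlg}, the final remainder is $r_s = 0 = \mathfrak{q}_{s+2,s}$, matching the claim for $i=s$. For $i=s-1$: since $u$ and $v$ are relatively prime, $r_{s-1}=\gcd(u,v)=1=\mathfrak{q}_{s+1,s}$, which is the claim for $i=s-1$. (If instead the modified form \eqref{E:EAmodified} is in use, the two base cases are read off the last two modified equations in the same way, again using $r_{s-1}=1$.) These two anchor the induction.

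For the inductive step, suppose $r_{i+1} = \mathfrak{q}_{i+3,s}$ and $r_i = \mathfrak{q}_{i+2,s}$ for some $i$ with $0 \le i \le s-1$; I want $r_{i-1} = \mathfrak{q}_{i+1,s}$. The $(i+1)$-st equation of the Euclidean algorithm is $r_{i-1} = q_{i+1} r_i + r_{i+1}$ (this is exactly the shape of every line of \eqref{E:EucAlg} once one reindexes, including the first two lines $u = q_1 v + r_1$ and $v = q_2 r_1 + r_2$ under the convention $r_{-1}=u$, $r_0=v$). Substituting the inductive hypotheses gives $r_{i-1} = q_{i+1}\mathfrak{q}_{i+3,s} + \mathfrak{q}_{i+4,s}$. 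Wait — I need the index bookkeeping to line up with \eqref{E:continuantdef}: the recursion says $\mathfrak{q}_{i+1,s} = q_{i+1}\mathfrak{q}_{i+2,s} + \mathfrak{q}_{i+3,s}$. So in fact the right substitution uses $r_i = \mathfrak{q}_{i+2,s}$ and $r_{i+1} = \mathfrak{q}_{i+3,s}$, yielding $r_{i-1} = q_{i+1}\mathfrak{q}_{i+2,s} + \mathfrak{q}_{i+3,s} = \mathfrak{q}_{i+1,s}$ directly by \eqref{E:continuantdef}. This closes the induction, and running it up to $i=-1$ gives $u = r_{-1} = \mathfrak{q}_{1,s}$ and $v = r_0 = \mathfrak{q}_{2,s}$, the final assertions.

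The only real subtlety — and the step I would be most careful about — is the index alignment between the Euclidean equations and the continuant recursion, together with handling the two base cases uniformly across both the standard form \eqref{E:EucAlg} and the modified form \eqref{E:EAmodified}; once that is set up correctly the argument is a one-line application of the defining recursion at each stage. It is also worth remarking that relative primality of $u$ and $v$ is used exactly once, to identify $r_{s-1}$ with $1 = \mathfrak{q}_{s+1,s}$; without it one would get $r_{s-1}\mathfrak{q}_{i+2,s}$ in place of $r_i$ throughout.
\end{proof}
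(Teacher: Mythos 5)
Your proof is correct and takes essentially the same route as the paper: anchor the two base cases $r_s = 0 = \mathfrak{q}_{s+2,s}$ and $r_{s-1} = 1 = \mathfrak{q}_{s+1,s}$ (the latter using coprimality), then observe that the Euclidean division equations and the continuant recursion \eqref{E:continuantdef} with $j=s$ are the same recurrence, closing a downward induction. The paper states this in two lines without writing out the induction; your version just makes the index bookkeeping (and the modified-algorithm case) explicit.
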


\begin{proof}
Because $u$ and $v$ are relatively prime, we have $r_{s-1} = 1 = \mathfrak{q}_{s+1,s}$ and $r_s = 0 = \mathfrak{q}_{s+2,s}$.  The formula $r_i = \mathfrak{q}_{i+2,s}$ follows from the observation that the recurrence \eqref{E:continuantdef} with $j=s$ is the same recurrence satisfied by the remainders.  
\end{proof}

The continuants $\mathfrak{q}_{1,i}$ have a prominant role in studying the Euclidean algorithm.  They are the numerators of the convergents of the simple continued fraction expansion of $u/v$, and they are the absolute values of coefficients commonly computed as part of the extended Euclidean algorithm.  We therefore make the following definition.

\begin{definition}
Let $q_1$, $q_2$, \ldots, $q_s$ be the sequence of quotients of $\EA(u,v, \delta)$ with associated continuants $\mathfrak{q}_{i,j}$.  We define the \textbf{Bezout coefficients} of $u$ and $v$ by 
\begin{equation*}
	\beta_i = \mathfrak{q}_{1,i}
\end{equation*}
for $-1 \leq i \leq s$.  
\end{definition}

The following lemmas reveal a close connection between the sequence of remainders of $\EA(u,v, \delta)$ and the corresponding Bezout coefficients.  Each makes a fine exercise in mathematical induction.

\begin{lemma}\label{L:congruence}
If $(q_i)_{i=1}^s$ and $(r_i)_{i=-1}^s$ are the sequences of quotients and remainders of $\EA(u,v, \delta)$ and $(\beta_i)_{i=-1}^{s}$ are the Bezout coefficients, then 
\begin{equation*}
	v \beta_i \equiv (-1)^i r_i \pmod{u} \text{ for $-1 \leq i \leq s$}
\end{equation*}
\end{lemma}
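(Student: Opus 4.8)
The plan is to prove the congruence $v\beta_i \equiv (-1)^i r_i \pmod u$ by induction on $i$, using the recurrences that $\beta_i$ and $r_i$ satisfy. First I would establish the base cases $i=-1$ and $i=0$. For $i=-1$, $\beta_{-1} = \mathfrak{q}_{1,-1} = 0$ and $r_{-1} = u$, so both sides are $\equiv 0 \pmod u$. For $i=0$, $\beta_0 = \mathfrak{q}_{1,0} = 1$ and $r_0 = v$, so the statement reads $v \equiv v \pmod u$, which is trivial. (One should double-check the indexing convention for $\mathfrak{q}_{1,-1}$ and $\mathfrak{q}_{1,0}$ against the continuant definition, where $\mathfrak{q}_{i+1,i}=1$ and $\mathfrak{q}_{i+2,i}=0$; with $i=0$ this gives $\mathfrak{q}_{1,0}=1$ and $\mathfrak{q}_{2,0}=0$, and one needs $\mathfrak{q}_{1,-1}=0$ which should follow similarly.)

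For the inductive step, suppose the congruence holds for $i-1$ and $i-2$; I would prove it for $i$. The key facts are that the Bezout coefficients satisfy $\beta_i = \mathfrak{q}_{1,i} = q_i \mathfrak{q}_{1,i-1} + \mathfrak{q}_{1,i-2} = q_i \beta_{i-1} + \beta_{i-2}$ (this is the alternate recurrence \eqref{E:altrecurrence}), and the remainders of the Euclidean algorithm satisfy exactly the same type of relation read in the other direction, namely $r_{i-2} = q_i r_{i-1} + r_i$, i.e. $r_i = r_{i-2} - q_i r_{i-1}$. Then
\begin{align*}
	v\beta_i &= q_i (v\beta_{i-1}) + v\beta_{i-2}\\
	&\equiv q_i (-1)^{i-1} r_{i-1} + (-1)^{i-2} r_{i-2}\\
	&= (-1)^{i}\bigl( r_{i-2} - q_i r_{i-1}\bigr)\\
	&= (-1)^i r_i \pmod u,
\end{align*}
which is the desired statement. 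I would want to be slightly careful that the recurrence $r_{i-2} = q_i r_{i-1} + r_i$ is valid across the whole range $-1 \le i \le s$, including the very first equations $u = q_1 v + r_1$ and $v = q_2 r_1 + r_2$ (which are covered precisely because of the definitions $r_{-1}=u$, $r_0=v$) and the modified final equations \eqref{E:EAmodified} when $\delta$ forces the modified algorithm; in both the standard and modified cases the last one or two equations still have the form $r_{i-2}=q_i r_{i-1}+r_i$, so the induction goes through uniformly.

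I do not expect a serious obstacle here — this is, as the paper says, a clean induction. The only place to tread carefully is the bookkeeping at the boundaries: making sure the continuant/Bezout recurrence and the remainder recurrence are literally the same three-term relation for every index $i$ in range, and confirming the two base values of $\beta_i$ from the continuant definition. Once those are pinned down, the inductive computation above is essentially forced by matching the two recurrences against each other, with the sign $(-1)^i$ arising exactly because the remainder recurrence subtracts the $q_i$ term while the continuant recurrence adds it.
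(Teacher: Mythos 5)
Your proof is correct and is essentially the same two-step induction the paper uses: identical base cases at $i=-1$ and $i=0$, and the same inductive computation matching the Bezout recurrence $\beta_i = q_i\beta_{i-1}+\beta_{i-2}$ against the remainder relation $r_{i-2}=q_ir_{i-1}+r_i$, with the paper merely writing the step as going from $i-1,i$ to $i+1$ rather than from $i-2,i-1$ to $i$. Your extra care about the boundary indexing and the modified final equations is sound but not a departure from the paper's argument.
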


\begin{proof}
The cases $i=-1$ and $i=0$ simply say that $0 \equiv -u \pmod{u}$  and $v \equiv v \pmod{u}$.  Further, if the congruence holds for $i-1$ and $i$ with $0 \leq i \leq s-1$, then
\begin{align*}
	v \beta_{i+1} &= v q_{i+1} \beta_i  + v \beta_{i-1} \\
	&\equiv (-1)^{i} q_{i+1} r_{i} + (-1)^{i-1} r_{i-1}  \pmod{u}\\
	&= (-1)^{i+1} r_{i+1}.
\end{align*}
The lemma follows by induction.
\end{proof}

\begin{lemma}\label{L:linearcombination}
If $(q_i)_{i=1}^{s}$ and $(r_i)_{i=-1}^{s}$ are the sequences of quotients and remainders of $\EA(u,v, \delta)$ and $\left( \beta_i \right)_{i=-1}^{s}$ are the Bezout coefficients, then $u=\beta_i r_{i-1}+\beta_{i-1}r_i$ for $0 \leq i \leq s$.
\end{lemma}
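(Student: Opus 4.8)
The plan is to prove the identity $u = \beta_i r_{i-1} + \beta_{i-1} r_i$ by induction on $i$, exactly as in the preceding two lemmas, using the parallel recurrences that the Bezout coefficients and the remainders satisfy. The key structural fact is that $(\beta_i)$ satisfies the forward recurrence $\beta_{i+1} = q_{i+1}\beta_i + \beta_{i-1}$ (this is \eqref{E:continuantdef} read with first index $1$), while the remainders satisfy $r_{i-1} = q_{i+1} r_i + r_{i+1}$, i.e. $r_{i+1} = r_{i-1} - q_{i+1} r_i$ (this is just the defining equation $\EA^{i+1}(u,v,\delta)$, rewritten). So one recurrence runs "up" and the other "down," and the claimed bilinear expression is precisely the combination that stays invariant.

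For the base case I would take $i = 0$: there $\beta_0 = \mathfrak{q}_{1,0} = q_1$, $\beta_{-1} = \mathfrak{q}_{1,-1} = 1$, $r_{-1} = u$, and $r_0 = v$, so the right-hand side is $q_1 \cdot v + 1 \cdot r_0$... wait, more carefully: $\beta_0 r_{-1} + \beta_{-1} r_0 = q_1 v + 1\cdot v$? No — $\beta_0 = q_1$ but $r_{-1}=u$, so $\beta_0 r_{-1}$ is already $q_1 u$, too big. The right base case is $i=1$: then $\beta_1 = \mathfrak{q}_{1,1} = q_1$, $\beta_0 = \mathfrak{q}_{1,0} = 1$, $r_0 = v$, $r_1 = u - q_1 v$, and indeed $\beta_1 r_0 + \beta_0 r_1 = q_1 v + (u - q_1 v) = u$. (Alternatively one can include $i=0$ by interpreting $\beta_{-1}=1$, $\beta_0 = q_1$ is wrong — so I would simply start the induction at $i=1$, or handle $i=0$ separately noting $\beta_0 r_{-1}+\beta_{-1}r_0$ does not obviously equal $u$; cleanest is to check $i = 0$ and $i=1$ directly if one wants the full range, but the inductive step only needs one anchor.) Let me instead anchor at $i=0$ properly: $\beta_{-1} = \mathfrak{q}_{1,-1}$. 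From \eqref{E:continuantdef}, $\mathfrak{q}_{i+1,i} = 1$, so $\mathfrak{q}_{1,0}=1$ and $\mathfrak{q}_{2,1}=1$; and $\mathfrak{q}_{i+2,i}=0$ gives $\mathfrak{q}_{1,-1}=0$. Hence $\beta_0 = 1$, $\beta_{-1} = 0$, and $\beta_0 r_{-1} + \beta_{-1} r_0 = 1\cdot u + 0 = u$. Good — so $i=0$ is the clean base case.

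The inductive step: assume $u = \beta_i r_{i-1} + \beta_{i-1} r_i$ for some $i$ with $0 \le i \le s-1$. Then compute
\begin{align*}
\beta_{i+1} r_i + \beta_i r_{i+1} &= (q_{i+1}\beta_i + \beta_{i-1}) r_i + \beta_i (r_{i-1} - q_{i+1} r_i)\\
&= q_{i+1}\beta_i r_i + \beta_{i-1} r_i + \beta_i r_{i-1} - q_{i+1}\beta_i r_i\\
&= \beta_i r_{i-1} + \beta_{i-1} r_i = u,
\end{align*}
where the first line uses the forward recurrence for $\beta$ and the rewritten $\EA^{i+1}$ equation for $r_{i+1}$, and the last equality is the inductive hypothesis. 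This closes the induction for $0 \le i \le s$.

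I do not expect a genuine obstacle here — the lemma is, as the authors note, a routine induction. The only point requiring a little care is the boundary bookkeeping at $i = -1, 0$ (getting $\beta_{-1}$, $\beta_0$, $r_{-1}$, $r_0$ right from the continuant conventions and the definitions $r_{-1}=u$, $r_0=v$), and making sure the recurrence $\EA^{i+1}(u,v,\delta)$ is still valid in the modified algorithm at the very end (it is, since \eqref{E:EAmodified} is built to be a genuine division step with quotient $1$). One alternative worth mentioning, in case a reader prefers it, is a non-inductive proof: by Lemma \ref{L:endofEA}, $r_{i-1} = \mathfrak{q}_{i+1,s}$ and $r_i = \mathfrak{q}_{i+2,s}$ and $\beta_{i} = \mathfrak{q}_{1,i}$, $\beta_{i-1} = \mathfrak{q}_{1,i-1}$, so the identity becomes a pure continuant identity $\mathfrak{q}_{1,s} = \mathfrak{q}_{1,i}\mathfrak{q}_{i+1,s} + \mathfrak{q}_{1,i-1}\mathfrak{q}_{i+2,s}$, which is the standard "Euler" splitting identity for continuants and follows from the symmetry rule together with \eqref{E:altrecurrence} by induction on $s - i$. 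Either route is short; I would present the direct induction as the primary proof for consistency with Lemmas \ref{L:congruence} and \ref{L:linearcombination}.
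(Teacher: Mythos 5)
Your proposal is correct and is essentially the paper's own proof: an induction anchored at $i=0$ (where $\beta_0=1$, $\beta_{-1}=0$ give $\beta_0 r_{-1}+\beta_{-1}r_0=u$), with the inductive step combining the forward recurrence $\beta_{i+1}=q_{i+1}\beta_i+\beta_{i-1}$ from \eqref{E:altrecurrence} with the Euclidean step $r_{i-1}=q_{i+1}r_i+r_{i+1}$; the paper merely runs the same substitution in the opposite direction. The only cosmetic difference is your initial fumbling over whether $\beta_0$ equals $q_1$ or $1$, which you resolve correctly before the induction begins.
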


\begin{proof}
For $i=0$, the equation is just $u=u$.
Assume that $u=\beta_ir_{i-1}+\beta_{i-1}r_{i}$ for some $i$ with $0 \leq i \leq s-1$. Then using \eqref{E:altrecurrence},
\begin{equation*}
	u=\beta_i(q_{i+1} r_i+r_{i+1})+(\beta_{i+1}-q_{i+1} \beta_i)r_i=\beta_{i+1}r_i+\beta_ir_{i+1}.
\end{equation*}
The lemma follows by induction.
\end{proof}

We now discuss background for studying structure in the Euclidean algorithm quotients.  
Fix a positive integer $k$. In recent work \cite{bS2015}, the third author proved that if $v$ with $0 < v < u$ satisfies the congruence
\begin{equation*}
	v^2 + kv \pm 1 \equiv 0 \pmod{u},
\end{equation*}
then the sequence of quotients of $\EA(u,v,\delta)$ (with $\delta=0$ if the plus sign is used in the above congruence and $\delta=-1$ otherwise) fits one of a finite list of ``end-symmetric'' patterns.  The list of patterns depends only on $k$.  We will use this result only when $k=1$, $2$, or $3$.

\begin{lemma}\label{L:patterns}
The sequence of quotients of $\EA(u,v,1)$ when $v^2 +  v - 1 \equiv 0 \pmod{u}$ has the form
\begin{equation*}
	\begin{matrix}
		q_1, & \ldots & q_{s-1}, & q_s + (-1)^{s+1}, & 1, & q_s, & q_{s-1}, & \ldots & q_1
	\end{matrix}
\end{equation*} 
for some positive integers $q_1$, \ldots, $q_s$.

When $v^2 + 3 v + 1 \equiv 0 \pmod{u}$, then $\EA(u,v,0)$ has quotient sequence of the form
\begin{equation*}
	\begin{matrix}
		q_1, & \ldots & q_{s-1}, & q_s + (-1)^{s+1} \cdot 3, & q_s, & q_{s-1}, & \ldots & q_1
	\end{matrix}
\end{equation*} 
for some positive integers $q_1$, \ldots, $q_s$.

When $v^2 + 2v + 1 \equiv 0 \pmod{u}$, that is, when
\begin{equation}\label{E:squarecongruence}
	(v+(-1)^{\delta})^2 \equiv 0 \pmod{u},
\end{equation}
then $\EA(u,v,0)$ has quotient sequence fitting one of the patterns
\begin{equation}\label{E:evenpatterns}
	\begin{matrix}
		& q_1, & \ldots & q_{s-1}, & {q_s +(-1)^{s+1} \cdot 2},  & {q_s}, & q_{s-1}, & \ldots & q_1 \\
		q_1, & \ldots & q_{s-1}, & {q_s + 1}, & {x}, & {1},   & {q_s}, & q_{s-1}, & \ldots & q_1\\
		q_1, & \ldots & q_{s-1}, & {q_s  - 1}, & {1}, & {x},   & {q_s}, & q_{s-1}, & \ldots & q_1\\
	\end{matrix}.
\end{equation}
for some positive integers $q_1$, \ldots, $q_s$ and $x$.  
\end{lemma}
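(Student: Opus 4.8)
The plan is to deduce all three statements from the structure theorem of \cite{bS2015}. Applying that result with $k=1$ and the minus sign, with $k=3$ and the plus sign, and with $k=2$ and the plus sign, we learn in each case that the quotient sequence of the relevant $\EA(u,v,\delta)$ is one of finitely many ``end-symmetric'' patterns, the list depending only on $k$. What then remains is to identify these finite lists for $k=1,2,3$ and to check that they are exactly the patterns displayed. Note first that in all three cases $\gcd(u,v)=1$ automatically: a prime dividing both $u$ and $v$ would divide the constant term $\pm 1$ of the congruence. Hence the hypotheses of Lemmas \ref{L:endofEA} and \ref{L:congruence} are satisfied throughout.

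To pin the lists down I would exploit the duality between a Euclidean algorithm and its reversal. Write the quotients of $\EA(u,v,\delta)$ as $Q_1,\dots,Q_N$, with remainders $R_i$ and Bezout coefficients $\beta_i=\mathfrak{q}_{1,i}$. Since $R_{N-1}=1$, Lemma \ref{L:congruence} gives $v\beta_{N-1}\equiv(-1)^{N-1}\pmod u$, and the congruence $v^2+kv+\varepsilon\equiv 0$ rewrites the inverse of $v$ modulo $u$ as $-\varepsilon(v+k)$, so $\beta_{N-1}\equiv(-1)^{N}\varepsilon(v+k)\pmod u$; one checks that the sign $(-1)^N\varepsilon$ equals $+1$ in each of the three cases (this is where the choice of parity $\delta$ enters). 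Because $0<\beta_{N-1}<u$, and $0<v+k<u$ for all but a handful of small values of $v$ excluded by the congruence itself, this forces $\beta_{N-1}=v+k$ on the nose. By the symmetry of continuants, $[Q_1,\dots,Q_{N-1}]=[Q_{N-1},\dots,Q_1]$, so the reversed sequence $Q_N,\dots,Q_1$ has first continuant $u$ and second continuant $\beta_{N-1}=v+k$; that is, it is the quotient sequence of $\EA(u,v+k,\delta')$ for the appropriate parity. Thus $\EA(u,v,\delta)$ and $\EA(u,v+k)$ have mutually reversed quotient sequences, and — their second inputs differing by only the small amount $k$ — comparing the two algorithms as they ``fold'' against each other near the middle leaves only finitely many possibilities for the central block. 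Reading these off gives exactly the patterns claimed: a perturbed quotient followed by a $1$ for $k=1$, a single perturbed quotient for $k=3$, and one of the three possibilities in \eqref{E:evenpatterns} for $k=2$.

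The main obstacle, I expect, is the $k=2$ case: its central block carries the extra free parameter $x$ and splits into three shapes according to how the fold propagates — including the situations where a middle quotient would otherwise be forced to $0$ or where a carry must be absorbed into $q_s$ — so the comparison of $\EA(u,v)$ with $\EA(u,v+2)$ has to be organized carefully into these cases, and the degenerate configurations ($u$ small, $v$ within $k$ of $u$, or a putative middle quotient equal to $1$) must be checked directly. A secondary technical nuisance is keeping the standard-versus-modified ($\delta$) conventions aligned on both sides of the reversal, so that the parities appearing in the exponents $(-1)^{s+1}$ come out as stated; managing exactly this is the reason the $\EA(u,v,\delta)$ formalism of Section \ref{S:background} was introduced.
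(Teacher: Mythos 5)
The paper does not prove this lemma at all: it is quoted directly from the third author's earlier paper \cite{bS2015}, whose main theorem (summarized in the paragraph preceding the lemma) already supplies the finite list of end-symmetric patterns for each $k$, so the lemma is just that theorem read off at $k=1,2,3$. Your opening paragraph does exactly this, and if you are willing to treat \cite{bS2015} as a black box you are finished there; everything after it is an attempt to reprove the cited theorem rather than to apply it.

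Judged as a self-contained argument, your sketch has the right mechanism and the bookkeeping you actually carry out is correct: $\gcd(u,v)=1$ is automatic, Lemma \ref{L:congruence} at the final step gives $\beta_{N-1}\equiv(-1)^{N}\varepsilon(v+k)\pmod{u}$, the sign works out to $+1$ in all three cases with the stated parities, and the symmetry of continuants then identifies the reversal of the quotient sequence of $u/v$ with the quotient sequence of $u/(v+k)$. But the step that actually produces the displayed patterns --- ``comparing the two algorithms as they fold against each other near the middle leaves only finitely many possibilities for the central block; reading these off gives exactly the patterns claimed'' --- is asserted, not carried out, and it is precisely where the content of \cite{bS2015} lives: one must show the two expansions agree on their first $s$ partial quotients, determine the one, two, or three middle quotients, and absorb the carries when a middle quotient degenerates to $0$ or $1$. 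That is a genuine gap, not a routine verification. A second, smaller issue: for $k=2$ the congruence $(v+1)^2\equiv 0\pmod{u}$ is satisfied by $v=u-1$ for every $u$, so your claim that $0<v+k<u$ fails only for ``a handful of small values of $v$ excluded by the congruence itself'' is wrong for that case --- it fails for an infinite family, which must be treated (or excluded) explicitly rather than waved off; the analogous check for $k=1$ and $k=3$ does go through since $v=u-1$ never satisfies those congruences.
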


The patterns \eqref{E:evenpatterns} are well known, being related to paper-folding sequences and folded continued fractions \cite{jS1979,aV2001}.  What seems to be new is their appearance the quotients of the Euclidean algorithm with $u$ and $v$ when $v$ satisfies \eqref{E:squarecongruence}.  Theorem \ref{T:degenerate} gives an arithmetical criteria for deciding which of the patterns \eqref{E:evenpatterns} describes the simple continued fraction expansion of $u/v$.

\vspace{0.5cm}

\section{Explicating the Euclidean algorithm}\label{S:classification}

Suppose $u$ and $v$ are positive integers with $u > v$ and $v^2 + v - 1 \equiv 0 \pmod{u}$.  Then $v-1$ satisfies the congruence $v^2 + 3v + 1 \equiv 0 \pmod{u}$.  According to Lemma \ref{L:patterns}, $\EA(u,v,1)$ has sequence of quotients of the form $q_1$, \ldots, $q_s + \delta_1$, $1$, $q_s+\delta_0$, \ldots, $q_1$, while $\EA(u,v-1,0)$ has sequence of quotients of the form $\tilde{q}_1$, \ldots, $\tilde{q}_s + \delta_1 \cdot 3$, $\tilde{q}_s + \delta_0 \cdot 3$, \ldots, $\tilde{q}_1$.  In both cases, $\delta_1 = 1$ if $s$ is odd and $0$ if $s$ is even, while $\delta_0 = 1$ if $s$ is even and $0$ if $s$ is odd. There is no \emph{a priori} reason for the sequence of $q_i$'s to equal the sequence of $\tilde{q}_i$'s.  Nevertheless, that is the conclusion of the following theorem, which also gives explicit formulas for the remainders of $\EA(u,v-1,0)$ in terms of the remainders of $\EA(u,v,1)$.

\begin{theorem}\label{T:twoEAs}
Let $u$ and $v$ be positive integers $u > v$, with $v^2 + v - 1 \equiv 0 \pmod{u}$.  Write the sequence of quotients of $\EA(u,v,1)$ as 
\begin{equation*}
	q_1, \ldots, q_s + \delta_1, 1, q_s + \delta_0, \ldots, q_1.
\end{equation*}
Let $(r_i)_{i=-1}^{2s+1}$ be the sequence of remainders, and for $i=-1$, \ldots, $s-1$, set $t_i = r_i + (-1)^{i+1} r_{2s-i}$.  Then $EA(u,v-1,0)$ is the sequence of $2s$ equations
\begin{alignat*}{3}
t_{i-2} \hspace{0.2cm} &= \hspace{1.1cm} q_{i} \cdot t_{i-1} \quad & &+ \hspace{0.45cm} t_i \quad & &\text{for $1 \leq i \leq s-1$}\\
t_{s-2} \hspace{0.2cm} &= \hspace{0.3cm} (q_s + \delta_1 \cdot 3) \cdot t_{s-1}  \quad &&+  \hspace{0.3cm} r_{s+1} & &\\
t_{s-1} \hspace{0.2cm} &= \hspace{0.3cm} (q_s + \delta_0 \cdot 3) \cdot r_{s+1} \quad &&+  \hspace{0.3cm} r_{s+2} & &\\
r_{i-1} \hspace{0.2cm} &= \hspace{0.7cm} q_{2s+1-i} \, \cdot r_{i} \hspace{-0.3cm} &&+  \hspace{0.3cm} r_{i+1} \quad & &\text{for $s+2 \leq i \leq 2s$}
\end{alignat*}
\end{theorem}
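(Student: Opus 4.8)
The plan is to show directly that the $2s$ displayed equations \emph{are} $\EA(u,v-1,0)$. This breaks into two tasks: (i) verify that each displayed equation is a true arithmetic identity; (ii) verify that the remainders occurring in them decrease in the way forced by the Euclidean algorithm, so that the equations genuinely perform that algorithm on $u$ and $v-1$. Once (i) and (ii) hold, the count of $2s$ equations (an even number) forces the algorithm to be $\EA(u,v-1,0)$ rather than $\EA(u,v-1,1)$.

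Throughout I would work with the quotients $q'_1,\dots,q'_{2s+1}$ of $\EA(u,v,1)$ themselves: by Lemma~\ref{L:patterns}, $q'_j=q_j$ for $1\le j\le s-1$, $q'_s=q_s+\delta_1$, $q'_{s+1}=1$, $q'_{s+2}=q_s+\delta_0$, and $q'_j=q_{2s+2-j}$ for $s+3\le j\le 2s+1$, where $\delta_1=1,\delta_0=0$ if $s$ is odd and $\delta_1=0,\delta_0=1$ if $s$ is even. Then the last block $r_{i-1}=q_{2s+1-i}r_i+r_{i+1}$, $s+2\le i\le 2s$, is literally the equation $\EA^{i+1}(u,v,1)$ (using $q'_{i+1}=q_{2s+1-i}$), so there is nothing to prove. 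For the first block, substituting $t_j=r_j+(-1)^{j+1}r_{2s-j}$ into $t_{i-2}=q_it_{i-1}+t_i$ and peeling off $r_{i-2}=q_ir_{i-1}+r_i$ (which is $\EA^{i}(u,v,1)$), the identity reduces to its mirror image $r_{2s-i}=q_ir_{2s-i+1}+r_{2s-i+2}$, which is $\EA^{2s-i+2}(u,v,1)$ (using $q'_{2s-i+2}=q_i$). Only the two middle equations require real computation: after expanding $t_{s-2}$ and $t_{s-1}$ and using $\EA^{s}(u,v,1)$, $\EA^{s+1}(u,v,1)$, $\EA^{s+2}(u,v,1)$ to eliminate $r_{s-1}$ and $r_s$ in favour of $r_{s+1}$ and $r_{s+2}$, each claimed identity collapses — separately in the cases $s$ even and $s$ odd, which collapse to the same thing — to the numerical identity $q_s^2+3q_s+1=q_s^2+3q_s+1$. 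This is precisely where the multiplier $3$ on $\delta_1,\delta_0$ and the coincidence of the two quotient sequences get forced.

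For task (ii), set $\rho_i=t_i$ for $-1\le i\le s-1$ (so $\rho_{-1}=u$, $\rho_0=v-1$) and $\rho_k=r_{k+1}$ for $s\le k\le 2s$ (so $\rho_{2s}=0$); these are the successive remainders of the displayed system. Let $c_1,\dots,c_{2s}$ be the displayed quotients and $\mathfrak c_{i,j}$ their continuants. I claim $\rho_k=\mathfrak c_{k+2,2s}$ for all $k$. For $k\ge s$ this is pure continuant bookkeeping: since $v(v+1)\equiv1\pmod u$ gives $\gcd(u,v)=1$, Lemma~\ref{L:endofEA} applied to $\EA(u,v,1)$ gives $r_{k+1}=\mathfrak q'_{k+3,2s+1}$, a continuant whose entries (for $k\ge s$) all lie in the reversed tail $q_{2s-k-1},\dots,q_1$, so by the symmetry of continuants it equals $[q_1,\dots,q_{2s-k-1}]$ — and $\mathfrak c_{k+2,2s}$ equals the same thing. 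Knowing the claim for $k=2s$ and $k=2s-1$, I would then descend: the displayed equation $\rho_k=c_{k+2}\rho_{k+1}+\rho_{k+2}$ together with \eqref{E:continuantdef} and induction gives $\rho_k=\mathfrak c_{k+2,2s}$ all the way down to $k=-1$. In particular $u=\mathfrak c_{1,2s}$ and $v-1=\mathfrak c_{2,2s}$, and the standard continuant inequalities then give $u>v-1>\rho_1>\dots>\rho_{2s-1}=1>\rho_{2s}=0$, with the lone possible coincidence $\rho_{2s-2}=\rho_{2s-1}$ when $q_1=1$. Hence the displayed equations perform the Euclidean algorithm on $u$ and $v-1$ — the standard version when $q_1\ge2$, the modified version \eqref{E:EAmodified} when $q_1=1$ — and, being $2s$ in number, they constitute $\EA(u,v-1,0)$.

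The main obstacle is the bookkeeping at the fold point $i=s$: one must keep straight the parity-dependent corrections $\delta_0,\delta_1$, the extra middle quotient $1$ that $\EA(u,v,1)$ carries but $\EA(u,v-1,0)$ does not, and the precise identifications $r_{s+1}=[q_1,\dots,q_{s-1}]$ and $r_{s+2}=[q_1,\dots,q_{s-2}]$ coming from Lemma~\ref{L:endofEA} and continuant symmetry; granting these, the two middle equations close up exactly. A minor point is the degenerate case $s=1$, in which the first and last blocks are empty and the two middle equations are adjacent; this should be checked separately (or absorbed by reading all the displayed index ranges as possibly empty).
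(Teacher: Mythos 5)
Your proposal is correct, and its core — verifying each displayed equation as an algebraic identity (outer blocks reducing to steps of $\EA(u,v,1)$ and their mirror images via $q'_{2s+2-i}=q_i$, the two middle equations by eliminating $r_{s-1}$ and $r_s$ through \eqref{E:middle} in the two parity cases) and then invoking uniqueness of the quotient--remainder sequence of prescribed parity — is exactly the paper's argument. Where you genuinely diverge is the monotonicity step. The paper proves directly that the $t_i$ decrease, using the division-algorithm inequalities $r_i \geq r_{i+1}+r_{i+2}$ in a chain $r_i - r_{i+1} \geq r_{i+2} \geq r_{i+3}+r_{i+4} > r_{2s-i}+r_{2s-i-1}$, with a separate check at $i=s-2$ (where the case $q_s+\delta_0=1$ needs special handling) and at the fold. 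You instead identify the new remainders with the continuants $\mathfrak{c}_{k+2,2s}$ of the new quotient sequence — anchoring at $\rho_{2s}=0$, $\rho_{2s-1}=1$ and descending through the already-verified equations — and then read off positivity and strict decrease from general continuant facts, with the single possible coincidence $\rho_{2s-2}=\rho_{2s-1}$ when $q_1=1$ corresponding exactly to the modified algorithm \eqref{E:EAmodified}. This buys uniformity: no case analysis near the fold, and the endpoint behavior falls out automatically; the cost is that you import Lemma~\ref{L:endofEA} and continuant symmetry into a proof the paper keeps free of continuants, and you must be careful that the descent only becomes available \emph{after} all the identities are verified (your write-up respects this ordering). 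Both arguments are sound; your flagged degenerate case $s=1$ is indeed the only bookkeeping hazard, and it is harmless once the empty index ranges are read as such.
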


\begin{proof}
A quick check verifies that $t_{-1} = u$ and $t_0 = v-1$, which begin the remainder sequence of $\EA(u,v-1,0)$. Because the sequence $(r_i)_{i=1}^{2s+1}$ is decreasing, it is clear that the purported quotients and remainders are all positive.  We check that the purported remainders form a strictly decreasing sequence (except that the final two may be equal when $\EA(u,v-1,0)$ is computed using the modification \eqref{E:EAmodified} of the Euclidean algorithm.) This is apparent for $r_{s+1}$, \ldots, $r_{2s+1}$. Also,  $t_{s-1} \geq r_{s-1} - r_{s+1} = r_s > r_{s+1}$.  (The equality is because the middle quotient of $\EA(u,v,1)$ is $1$.)

We must show $t_{i} > t_{i+1}$ for $1 \leq i \leq s-2$.  From the division algorithm, we have $r_i \geq r_{i+1} + r_{i+2}$ for $-1 \leq i \leq 2s-1$.  Thus, for $-1 \leq i \leq s-3$, we have
\begin{align*}
	r_{i} - r_{i+1} &\geq r_{i+2} \geq r_{i+3} + r_{i+4} >  r_{2s-i} + r_{2s-i-1}.
\end{align*}
It follows that $t_{i} > t_{i+1}$ for $1 \leq i \leq s-3$.  The above chain of inequalities also holds with the final inequality replaced by an equality when $i=s-2$.  The second inequality is strict when $i=s-2$ unless  $q_s + \delta_0 = 1$, which only happens if $s$ is odd.  But in that case, $t_{s-2} = r_{s-2} + r_{s+2} > r_{s-1} - r_{s+1} = t_{s-1}$ holds anyway.  

To ensure the equations in the theorem are the steps of $\EA(u,v-1,0)$, it remains to check the algebraic validity of each step.  The theorem will then follow from the uniqueness of the quotients and remainders.

The equation $t_{i-2} = q_i \cdot t_{i-1} + t_i$ is equivalent to
\begin{equation*}
	(-1)^{i+1} \left( r_{i-2} - q_i r_{i-1} - r_i \right) =     r_{2s-i}  - q_i  r_{2s+1-i}  - r_{2s+2-i}
\end{equation*}
The expression on the left is $0$.  Also, examining the pattern of the sequence of quotients of $\EA(u,v,1)$, we see that $q_{2s+2-i} = q_i$ for $i=1$, \ldots, $s-1$.  Thus, the $2s-i+1$th step of $\EA(u,v,1)$ is 
\begin{equation}\label{E:algebracheck}
	r_{2s-i} = q_i r_{2s+1-i} + r_{2s+2-i},
\end{equation}
and the right side is also 0. Substituting $2s+1-i$ for $i$ in \eqref{E:algebracheck}, we find as well that $r_{i-1} = q_{2s+1-i} r_i + r_{i+1}$ for $s+2 \leq i \leq 2s$, which verifies steps $i=s+2$ through $i=2s$ in the theorem.

We now check the middle pair of equations.  We know that the $s$th through $s+2$nd equations of $\EA(u,v,1)$ are
\begin{alignat}{2}\label{E:middle}
	r_{s-2} &= \quad\left( q_s + \delta_1 \right) r_{s-1} & &+ \quad r_s \notag\\
	r_{s-1} &= \quad \quad \qquad r_s & &+ \quad r_{s+1} \\
	r_{s} &= \quad \left( q_s + \delta_0 \right) r_{s+1} & &+ \quad r_{s+2}. \notag
\end{alignat}
Assume first that $s$ is odd so that $\delta_1 = 1$ and $\delta_0 = 0$. The equation $t_{s-2} = \left( q_s + \delta_1 \cdot 3 \right) t_{s-1} + r_{s+1}$ is equivalent to
\begin{equation*}
	r_{s-2}  = \left( q_s + 3 \right) \left( r_{s-1}  - r_{s+1} \right) + r_{s+1} - r_{s+2}.
\end{equation*}
Substituting in turn $r_{s+2} = r_s - q_s r_{s+1}$ and $r_{s+1} = r_{s-1} - r_{s}$ from \eqref{E:middle}, this is equivalent to
\begin{align*}
	r_{s-2}  &= \left( q_s + 3 \right) \left( r_{s-1} - r_{s+1} \right) + r_{s+1} - r_s + q_s r_{s+1} \\
	&= \left( q_s + 3 \right) r_s + r_{s-1} - 2r_s + q_s r_{s-1} - q_s r_s\\
	&= (q_s + 1) r_{s-1} + r_s,
\end{align*}
which is the first of equations \eqref{E:middle}.  

If, instead, $s$ is even, so $\delta_1 = 0$ and $\delta_0 = 1$, then $t_{s-2} = (q_s + \delta_1 \cdot 3) t_{s-1} + r_{s+1}$ is equivalent to
\begin{equation*}
	r_{s-2} = q_s \left( r_{s-1} + r_{s+1} \right) + r_{s+1} + r_{s+2}
\end{equation*}
Substituting in turn $r_{s+2} = r_s - q_s r_{s+1} - r_{s+1}$ and $r_{s+1} = r_{s-1} - r_s$, this is equivalent to
\begin{align*}
	r_{s-2} &= q_s \left( r_{s-1} + r_{s+1} \right) + r_s - q_s r_{s+1}\\
	&= q_s \left( 2 r_{s-1} - r_s \right) + r_s - q_s r_{s-1} + q_s r_s\\
	&= q_s r_{s-1} + r_s,
\end{align*}
which is the first of equations \eqref{E:middle}.

The verification that $t_{s-1} = (q_s + \delta_0 \cdot 3) \cdot r_{s+1} + r_{s+2}$ is entirely similar, using the latter two equations of \eqref{E:middle}.
\end{proof}

\begin{proof}[Proof of Algorithm \ref{A:BQFalgorithm}]
Let the quotients and remainders of $\EA(u,v,1)$ be written as in Theorem \ref{T:twoEAs}. Suppose first that $s$ is odd.  Applying Lemma \ref{L:linearcombination} with $i=s$ to $\EA(u,v,1)$, we have $u = \left[ q_1, \ldots, q_{s-1}, q_s + 1 \right]  r_{s-1} + \left[ q_1, \ldots, q_{s-1} \right] r_s$.    By the symmetry of continuants and recurrence \eqref{E:altrecurrence}, it follows that
\begin{align*}
	u &= \left[ q_s + 1, q_{s-1}, \ldots, q_1 \right] r_{s-1}  + \left[ q_{s-1}, \ldots, q_1 \right] r_s\\
	&=   \left[ q_{s-1}, \ldots, q_1 \right] \left( r_{s-1} + r_s \right) + \left[ q_{s}, \ldots, q_1 \right]  r_{s-1}
\end{align*}
Now use the ``end-symmetric'' form of the quotient sequence of $\EA(u,v,1)$ and Lemma \ref{L:endofEA} to obtain
\begin{equation*}
	u = r_{s+1} \left( r_{s-1} + r_s \right) + r_{s} r_{s-1}
\end{equation*}
Substituting out $r_{s-1}$ using the middle of equations \eqref{E:middle} gives
\begin{equation*}
	u = r_s^2 + 3 r_s r_{s+1} + r_{s+1}^2
\end{equation*}

Suppose now that $s$ is even.  Applying Lemma \ref{L:linearcombination} with $i=s$ to $\EA(u,v,1)$ in this case gives $u = \left[ q_1, \ldots, q_s \right] r_{s-1} + \left[ q_1, \ldots, q_{s-1} \right] r_s$.  Again using the recurrence \eqref{E:altrecurrence}, it follows that
\begin{equation*}
	u =  \left[ q_s + 1, q_{s-1}, \ldots, q_1 \right] r_{s-1} +  \left[ q_{s-1}, \ldots, q_1 \right] \left( r_s - r_{s-1} \right),
\end{equation*}
and Lemma \ref{L:endofEA} shows
\begin{equation*}
	u = r_s r_{s-1} + r_{s+1} \left( r_s - r_{s-1} \right).
\end{equation*}
Substituting with \eqref{E:middle} once more gives
\begin{equation*}
	u = (r_s - r_{s+1})^2 + 3 (r_s - r_{s+1}) r_{s+1} + r_{s+1}^2
\end{equation*}
Thus, in either case, $r_{s+1} = c$ in the unique representation $p = b^2 + 3bc + c^2$ with $b> c > 0$. If $s$ is odd, then $r_s = b$, and if $s$ is even, then $r_s = b+c$.  The inequalities $5 b^2 > b^2 + 3bc + c^2 > 5 c^2$ show that 
\begin{equation*}
	b + c > b > \sqrt{\frac{p}{5}} > c
\end{equation*}
Thus, regardless of whether $s$ is odd or even, $c$ is the first remainder smaller than $\sqrt{\frac{p}{5}}$.
\end{proof}

Fix anew positive integers $b$ and $c$ with $\gcd(b,c)=1$.  We next give an explicit description of the quotients and remainders of $\EA(b^2, bc \pm 1)$ in terms of the quotients, remainders, and Bezout coefficients of $\EA(b,c)$.   The algorithm for computing inverses in modular arithmetic falls out of this description. 

\begin{theorem}\label{T:theorem1}
Let $b > c > 1$ be integers with $\gcd(b,c)=1$.  Let $(q_i)_{i=1}^s$ and $(r_i)_{i=-1}^s$ be the sequences of quotients and remainders of the standard (i.e., unmodified) Euclidean algorithm with $b$ and $c$, let $(\beta_i)_{i=-1}^{s}$ be the corresponding continuants, and set  $t_i=r_i b  \pm(-1)^i\beta_i$ for $-1 \leq i \leq s-1$. Then $EA(b^2,bc\pm1, 0)$ is the sequence of $2s$ equations
\begin{align*}
t_{i-2} \hspace{0.2cm} &= \hspace{1.0cm} q_{i} \cdot t_{i-1} &&+ \hspace{0.1cm} t_{i} \quad & &\text{for $1 \leq i \leq s-1$}\\
t_{s-2} \hspace{0.2cm} &= \hspace{0.35cm} (q_s \pm(-1)^{s}) \cdot t_{s-1} \hspace{-0.4cm} &&+  \hspace{0.1cm}\beta_{s-1} & &\\
t_{s-1} \hspace{0.2cm} &= \hspace{0.2cm} (q_s \pm(-1)^{s-1}) \cdot \beta_{s-1} \hspace{-0.8cm} &&+  \hspace{0.1cm} \beta_{s-2} & &\\
\beta_{2s+1-i} \hspace{0.2cm} &= \hspace{0.6cm} q_{2s+1-i} \, \cdot \beta_{2s-i} \hspace{-0.3cm} &&+  \hspace{0.1cm} \beta_{2s-1-i} \quad & &\text{for $s+2 \leq i \leq 2s$}
\end{align*}
\end{theorem}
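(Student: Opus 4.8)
The plan is to mirror the proof of Theorem \ref{T:twoEAs}: write down the $2s$ displayed equations, check that they are algebraically valid, that their quotients are positive integers, and that their remainders strictly decrease (the last two possibly being equal, which is exactly the case in which \eqref{E:EAmodified} is in force), and then invoke uniqueness of Euclidean division. Since the system has $2s$ equations -- an even number -- it is automatically $\EA(b^2, bc \pm 1, 0)$ rather than $\EA(b^2, bc\pm 1, 1)$. The first two left-hand sides are correct because $r_{-1} = b$, $\beta_{-1} = 0$, $r_0 = c$, $\beta_0 = 1$, so that $t_{-1} = b^2$ and $t_0 = bc \pm 1$ straight from the definition $t_i = r_i b \pm (-1)^i\beta_i$.

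For the algebraic validity I would first assemble the facts about $\EA(b,c)$ that get used: the remainder recurrence $r_{i-2} = q_i r_{i-1}+r_i$, the Bezout recurrence $\beta_i = q_i\beta_{i-1}+\beta_{i-2}$ (from the Symmetry of continuants and \eqref{E:altrecurrence}), and the terminal data $r_{s-1} = \gcd(b,c) = 1$, $r_{s-2} = q_s$, and $b = \mathfrak{q}_{1,s} = \beta_s = q_s\beta_{s-1}+\beta_{s-2}$ from Lemma \ref{L:endofEA}. Expanding $t_{i-2}$, $t_{i-1}$, $t_i$ in the generic equation $t_{i-2} = q_i t_{i-1}+t_i$ and separately matching the coefficient of $b$ and the coefficient of $\pm(-1)^i$ shows that equation is equivalent to the conjunction of the two recurrences above; the tail equations $\beta_{2s+1-i} = q_{2s+1-i}\beta_{2s-i}+\beta_{2s-1-i}$ are literally the Bezout recurrence re-indexed by $j = 2s+1-i$, valid for $1\le j\le s-1$. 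For the two middle equations one substitutes $r_{s-1}=1$ and $r_{s-2}=q_s$ to rewrite $t_{s-2}$ and $t_{s-1}$, and then the identity $b = q_s\beta_{s-1}+\beta_{s-2}$ makes both collapse, with the $\pm(-1)^s$ and $\pm(-1)^{s-1}$ corrections to $q_s$ being precisely what absorbs the stray $\pm(-1)^s b$ terms.

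The positivity and monotonicity are routine but use Lemma \ref{L:linearcombination}: from $b = \beta_i r_{i-1}+\beta_{i-1}r_i$ we get $\beta_i \le b/r_{i-1} \le b/2 < b$ for $0 \le i \le s-1$ (since $r_{i-1} \ge r_{s-2}\ge 2$), hence $t_i \ge r_i b-\beta_i > 0$; and $\beta_i > 0$ for $0\le i\le s-1$, $\beta_{-1}=0$, so all displayed remainders are nonnegative. The displayed quotients are quotients of $\EA(b,c)$ except the two middle ones, which are $q_s\pm(-1)^s$ and $q_s\pm(-1)^{s-1}$, each $\ge q_s-1 \ge 1$ because $q_s\ge 2$; the final displayed quotient is $q_1$, so the system is the standard algorithm when $q_1\ge 2$ and the modification \eqref{E:EAmodified} when $q_1=1$, consistently with $\beta_1 = q_1\beta_0+\beta_{-1}=1=\beta_0$ in that case. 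Granting the equations, $t_{i-2} = q_i t_{i-1}+t_i > t_{i-1}$ chains down to $t_{s-2}$, the first middle equation gives $t_{s-2} > t_{s-1}$ and the second gives $t_{s-1} > \beta_{s-1}$ (both because $\beta_{s-2},\beta_{s-1} > 0$), and the tail gives $\beta_{s-1}>\cdots>\beta_0=1>\beta_{-1}=0$, the only possible equality being $\beta_1=\beta_0$ when $q_1=1$. With all of this in hand, uniqueness of Euclidean division identifies the displayed system with $\EA(b^2,bc\pm1,0)$.

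I expect the real work to be the two middle equations, because of the interplay between the two independent sign choices -- the $\pm$ coming from $bc\pm 1$ and the $(-1)^i$ coming from the parity of the index -- so I would check the two signs separately rather than trying to juggle them simultaneously. One structural point to keep in mind is the hypothesis $c>1$: it forces $s\ge 2$ (otherwise $\gcd(b,c)=r_0=c>1$, absurd), which is what makes $\beta_{s-2}$ a bona fide Bezout coefficient and keeps $r_{s-2}\ge 2$ available for the bounds above.
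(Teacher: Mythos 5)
Your proposal is correct and follows essentially the same route as the paper: verify each displayed equation from the remainder and Bezout recurrences (the middle two via $r_{s-1}=1$, $q_s=r_{s-2}$, and $b=q_s\beta_{s-1}+\beta_{s-2}$), check positivity and strict decrease of the remainders, and conclude by uniqueness of the quotient--remainder sequence. The only (immaterial) divergence is that you derive monotonicity by chaining the already-verified equations, while the paper gets $t_i>t_{i+1}$ directly from the inequality $(r_i-r_{i+1})b>\beta_i+\beta_{i+1}$ supplied by Lemma \ref{L:linearcombination}.
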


\begin{proof} 
The proof can be conducted in an analogous manner to the proof of Theorem \ref{T:twoEAs}.
One readily checks that the first two remainders are $t_{-1} = b^2$ and $t_0 = bc \pm 1$. The observation $q_s \geq 2$ was made in the first paragraph of Section \ref{S:background}, so the purported quotients are all positive.  So are the remainders since $b \geq \beta_i$ for $-1 \leq i \leq s-1$.

For $s+2 \leq i \leq 2s$, the equation $\beta_{2s+1-i} = q_{2s+1-i} \, \cdot \beta_{2s-i} + \beta_{2s-1-i}$ follows from \eqref{E:altrecurrence}.  For $1 \leq i \leq s-1$, the equality $t_{i-2} = q_i t_{i-1} + t_i$ can be deduced from the equation $\EA^i(b,c)$ and \eqref{E:altrecurrence}.  To verify the middle two equations, we  first note that because $b$ and $c$ are relatively prime, we have $r_{s-1} = 1$, $t_{s-1} = b \pm (-1)^{s-1} \beta_{s-1}$, and $q_s = r_{s-2}$.   The equations can then be verified using Lemma \ref{L:linearcombination} with $u=b$, $v=c$, and $i=s-1$:
\begin{align*}
	(q_s \pm (-1)^{s}) t_{s-1} + \beta_{s-1} &= (r_{s-2} \pm (-1)^{s}) b \pm  (-1)^{s-1} r_{s-2} \beta_{s-1} \\
	&= r_{s-2}  \, b \pm (-1)^{s-2} \beta_{s-2} \\ &= t_{s-2}
\end{align*}
and
\begin{align*}
	(q_s \pm(-1)^{s-1})\beta_{s-1} +\beta_{s-2} &= r_{s-2} \beta_{s-1} \pm (-1)^{s-1} \beta_{s-1} + \beta_{s-2}\\
	&= b \pm (-1)^{s-1} \beta_{s-1} \\
	&= t_{s-1}.
\end{align*}

Finally, the remainders form a decreasing sequence.  For $-1 < i < s-1$, the inequality $\left( r_{i} - r_{i+1} \right) n > \beta_{i} +\beta_{i+1}$ follows from Lemma \ref{L:linearcombination} and implies that $t_{i} > t_{i+1}$.  The inequality $\beta_{s-1} < t_{s-1}$ follows from the equation $t_{s-1} = \left( q_s \pm (-1)^{s-1} \right) \beta_{s-1} + \beta_{s-2}$ verified in the last paragraph.  And $\beta_{i-1} < \beta_{i}$ for $0 \leq i \leq s$ follows from the recurrence \eqref{E:altrecurrence}.
\end{proof}

\begin{proof}[Proof of the algorithm for multiplicative inverses]
When $m=1$, the algorithm is easily validated.  If $m > n$, then the third step of $\EA (n^2, mn+ 1)$ will be division of $rn + 1$ into $n^2$, where $r$ is the remainder when $m$ is divided by $n$.  Thus, it suffices to assume $n > m > 1$, so also $s > 1$.

Theorem \ref{T:theorem1} implies the first remainder less than $n$ in $\EA(n^2, mn+1)$ is $\beta_{s-1}$ when $s$ is odd and $t_{s-1}$ when $s$ is even.  We apply Lemma \ref{L:congruence} to $\EA (n,m)$ to find $m \beta_{s-1} \equiv (-1)^{s-1} \pmod{n}$.  Thus when $s$ is odd, the product of $m$ and the first remainder less than $n$ is
\begin{equation*}
	m \beta_{s-1} \equiv 1 \pmod{n}.
\end{equation*}
When $s$ is even, the product is
\begin{equation*}
	m t_{s-1} = mn - m \beta_{s-1} \equiv 1 \pmod{n}. \qedhere
\end{equation*}
\end{proof}

We now give a complete description of $\EA(ab^2, abc \pm 1)$ for positive integers $a \geq 2$, $b$, and $c$ and $\gcd (b,c) = 1$.

\begin{theorem}\label{T:theorem4}
Let $a$, $b$, $c$, and $k$ be integers with $b > c > 1$, $\gcd(b,c)=1$, and $a \geq 2$.  Let $(q_i)_{i=1}^s$ and $(r_i)_{i=-1}^s$ be the sequences of quotients and remainders in $\EA(b,c)$, let $(\beta_i)_{i=-1}^{s}$ be the corresponding Bezout coefficients, and set $t_i=a b r_i  + (-1)^{i+k} \beta_i$ for $-1 \leq i \leq s-1$. If $(-1)^{s+k}=-1$, then $EA(a b^2, abc + (-1)^k,0)$ is the sequence of $2s+2$ equations
\begin{align*}
t_{i-2} \hspace{0.2cm} &= \hspace{0.8cm} q_{i} \cdot t_{i-1} \hspace{-0.4cm}  &&+  \hspace{0.1cm} t_{i} \quad & &\text{for $1 \leq i \leq s-1$}\\
t_{s-2} \hspace{0.2cm}  &= \hspace{0.35cm}  (q_s-1) \cdot t_{s-1} \hspace{-0.4cm}  &&+  \hspace{0.1cm} (t_{s-1}-b) &&\\
t_{s-1} \hspace{0.2cm}  &= \hspace{0.25cm} 1 \cdot (t_{s-1}- b) \hspace{-0.4cm}  &&+  \hspace{0.1cm} b &&\\
t_{s-1}-b \hspace{0.2cm} &= \hspace{0.5cm} (a - 1) \cdot b \hspace{-0.4cm}  &&+  \hspace{0.1cm} \beta_{s-1} && \\ 
b \hspace{0.2cm}  &= \hspace{0.85cm} q_s \cdot \beta_{s-1}  \hspace{-0.4cm}  &&+  \hspace{0.1cm} \beta_{s-2} && \\ 
\beta_{2s+3-i} \hspace{0.2cm} &= \hspace{0.2cm} q_{2s+3-i} \cdot \beta_{2s+2-i} \hspace{-0.3cm}  \hspace{-0.4cm}  &&+  \hspace{0.1cm} \beta_{2s+1-i} \quad & &\text{for $s+4 \leq i \leq 2s+2$.}
\end{align*}
When $(-1)^{s+k}=1$, steps $s$ through $s+3$ change to:
\begin{align*}
t_{s-2}  \hspace{0.2cm} &= \hspace{0.70cm} q_s \cdot  t_{s-1} \hspace{0.05cm} &&+ \hspace{0.1cm} b &&\\
t_{s-1}   \hspace{0.2cm} &= \hspace{0.5cm} (a-1) \cdot b \hspace{0.05cm} &&+ \hspace{0.1cm} (b-\beta_{s-1}) &&\\
b  \hspace{0.2cm} &= \hspace{0.25cm} 1\cdot (b - \beta_{s-1}) \hspace{0.05cm} &&+ \hspace{0.1cm} \beta_{s-1} &&\\
b-\beta_{s-1}   \hspace{0.2cm} &= \hspace{0.2cm} (q_s-1) \cdot \beta_{s-1} \hspace{0.05cm} &&+ \hspace{0.1cm} \beta_{s-2} \quad & & \hspace{0.35cm} \phantom{\text{for $s+4 \leq i \leq 2s+2$.}}\\
\end{align*}
\end{theorem}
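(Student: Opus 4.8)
The plan is to proceed exactly as in the proofs of Theorems \ref{T:twoEAs} and \ref{T:theorem1}: write down the list of $2s+2$ equations, verify that it really is the sequence of division steps of the Euclidean algorithm applied to $ab^2$ and $abc+(-1)^k$, and then invoke the uniqueness of the (standard or modified) Euclidean algorithm. Concretely, five things must be checked: (i) the first two remainders are $t_{-1}=ab^2$ and $t_0=abc+(-1)^k$; (ii) every displayed quotient is a positive integer; (iii) the displayed remainders strictly decrease, with equality permitted only for the last two nonzero entries, which coincide exactly when the modification \eqref{E:EAmodified} is the relevant algorithm; (iv) each displayed equation is an algebraic identity; and (v) in each equation the written ``remainder'' lies in $[0,\text{divisor})$, so the equation is a genuine division step. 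Because the list has even length $2s+2$, items (iii) and (v) together with uniqueness force it to be $\EA(ab^2,abc+(-1)^k,0)$.

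Items (i), (ii), and (iv) are routine. For (i), substitute $r_{-1}=b$, $r_0=c$, $\beta_{-1}=\mathfrak{q}_{1,-1}=0$, and $\beta_0=\mathfrak{q}_{1,0}=1$ into $t_i=abr_i+(-1)^{i+k}\beta_i$. For (ii), the quotients are $q_1,\dots,q_{s-1}$, then four middle quotients, then $q_{s-1},\dots,q_1$ again, and positivity uses $q_i\ge 1$, the bound $q_s\ge 2$ recorded in Section \ref{S:background}, and $a\ge 2$. For (iv), each equation $t_{i-2}=q_it_{i-1}+t_i$ with $1\le i\le s-1$ splits, after expanding the $t$'s, into its $ab$-part, which is the equation $\EA^i(b,c)$, and its $\beta$-part, which after cancelling the common sign $(-1)^{i+k}$ is the Bezout-coefficient recurrence $\beta_i=q_i\beta_{i-1}+\beta_{i-2}$ of \eqref{E:altrecurrence}; the trailing block $\beta_{2s+3-i}=q_{2s+3-i}\beta_{2s+2-i}+\beta_{2s+1-i}$ is again \eqref{E:altrecurrence}; and each of the four middle equations collapses to an identity after one substitutes $r_{s-1}=1$, $r_{s-2}=q_s$, and $b=q_s\beta_{s-1}+\beta_{s-2}$ (Lemma \ref{L:endofEA}). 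This last verification must be carried out separately, but in parallel, in the two cases $(-1)^{s+k}=-1$ and $(-1)^{s+k}=+1$; the only role of the case distinction is the signs $(-1)^{s-1+k}$ and $(-1)^{s-2+k}$ attached to $t_{s-1}$ and $t_{s-2}$.

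The substance of the argument, as in Theorems \ref{T:twoEAs} and \ref{T:theorem1}, is in (iii) and (v). For $-1\le i\le s-3$ one shows $t_i>t_{i+1}$ by writing $t_i-t_{i+1}=ab(r_i-r_{i+1})+(-1)^{i+k}(\beta_i+\beta_{i+1})$: when $(-1)^{i+k}=1$ this is immediate, and when $(-1)^{i+k}=-1$ it reduces to $ab(r_i-r_{i+1})>\beta_i+\beta_{i+1}$, which follows from Lemma \ref{L:linearcombination} for $\EA(b,c)$ together with $a\ge 1$, just as in the proof of Theorem \ref{T:theorem1}. One then checks the ``middle stretch'' of remainders, namely $t_{s-2}>t_{s-1}>t_{s-1}-b>b>\beta_{s-1}>\beta_{s-2}$ when $(-1)^{s+k}=-1$, and $t_{s-2}>t_{s-1}>b>b-\beta_{s-1}>\beta_{s-1}>\beta_{s-2}$ when $(-1)^{s+k}=+1$: each of these inequalities is a one-line estimate from $t_{s-1}=ab+(-1)^{s-1+k}\beta_{s-1}$, $b=q_s\beta_{s-1}+\beta_{s-2}$, $q_s\ge 2$, and $a\ge 2$ (for example, in the first case $t_{s-1}-b=(a-1)b+\beta_{s-1}>b$ and $t_{s-2}=abq_s-\beta_{s-2}>ab+\beta_{s-1}=t_{s-1}$). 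The trailing $\beta$'s are handled by \eqref{E:altrecurrence}, the sole failure of strict decrease being $\beta_1=\beta_0$ when $q_1=1$ --- precisely the case in which the standard algorithm on $(ab^2,abc+(-1)^k)$ has odd length and the modified one, with its repeated penultimate remainder, is the one of even length $2s+2$. Finally (v) is verified in tandem for the four middle equations, confirming in each that the stated quotient equals $\lfloor\text{dividend}/\text{divisor}\rfloor$ --- for instance the quotient $1$ in $t_{s-1}=1\cdot(t_{s-1}-b)+b$ is correct because $b<t_{s-1}-b$, which again uses $a\ge 2$. I expect this middle-stretch bookkeeping, kept straight across the two sign cases, to be the only real obstacle; nothing is deep, but one should run through the small cases ($a=2$, and $s$ as small as $c>1$ permits, namely $s=2$) to be sure no inequality degenerates.
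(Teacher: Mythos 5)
Your proposal is correct and follows essentially the same route as the paper: verify the initial remainders, the positivity of the quotients, the algebraic validity of each equation (splitting into the $ab$-part from $\EA(b,c)$ and the $\beta$-part from \eqref{E:altrecurrence}, with the four middle equations handled via $r_{s-1}=1$, $q_s=r_{s-2}$, and $\beta_s=b$), and the strict decrease of the remainders, then invoke uniqueness of the Euclidean algorithm. Your explicit attention to the degenerate case $q_1=1$, where $\beta_1=\beta_0$ and the modified algorithm \eqref{E:EAmodified} is the one of even length, is a point the paper's proof passes over silently, but it does not change the argument.
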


\begin{proof}

It follows as in the proof of Theorem \ref{T:theorem1} that the purported quotients and remainders are positive (excluding the final remainder). The equations $\beta_{2s+3-i} = q_{2s+3-i} \, \cdot \beta_{2s+2-i} +\beta_{2s+1-i}$ and $t_{i-2} = q_i t_{i-1} + t_i$ can be deduced as in the proof of Theorem \ref{T:theorem1}.  The equations $t_{s-1} = 1 \cdot \left( t_{s-1} - b \right) + b$ and $b=1 \cdot (b - \beta_{s-1}) + \beta_{s-1}$ are clearly true.     Lemma \ref{L:endofEA} shows that $\beta_s = b$.  Thus, the equations $b = q_s \cdot \beta_{s-1} + \beta_{s-2}$ and $b - \beta_{s-1} = (q_s - 1) \beta_{s-1} + \beta_{s-2}$ are consequences of \eqref{E:altrecurrence}. 

Since $\gcd (b,c) = 1$, we have $r_{s-1} = 1$, $t_{s-1} = a b - (-1)^{s+k} \beta_{s-1}$, and $q_s = r_{s-2}$.  From this, we obtain the equations $t_{s-1} - b = (a-1) b + \beta_{s-1}$ when $(-1)^{s+k}=-1$ and $t_{s-1} = (a-1) b + (b - \beta_{s-1})$ when $(-1)^{s+k}=1$. 

When $(-1)^{s+k} = -1$, the $s$th equation is valid since
\begin{align*}
	(q_s-1) t_{s-1} + (t_{s-1} - b) &= q_s (a b + \beta_{s-1}) - \beta_s\\
	&= a b r_{s-2} +  (\beta_s - \beta_{s-2}) - \beta_s,\\
	&= t_{s-2}.
\end{align*}
Similarly, when $(-1)^{s+k}=1$,
\begin{align*}
	q_s t_{s-1} + b &= q_s (a b -\beta_{s-1}) + b\\
	&= a b r_{s-2}  - (\beta_s - \beta_{s-2}) + \beta_s\\
	&= t_{s-2}.
\end{align*}  

When $(-1)^{s+k} = -1$, the inequality $t_{s-1} - b < t_{s-1}$ is clear and the inequality $b < t_{s-1} - b$ follows from the assumption that $a \geq 2$.  When $(-1)^{s+k} = 1$, the inequality $b < t_{s-1}$ follows from the assumption that $a \geq 2$ and from $b = \beta_s > \beta_{s-1}$.  The inequality $b-\beta_{s-1} < b$ is clear, and the inequality $\beta_{s-1} < b - \beta_{s-1}$ follows from $b = q_s \beta_{s-1} + \beta_{s-2}$ and $q_s \geq 2$. 
That $t_i < t_{i-1}$ and $\beta_i > \beta_{i-1}$ for $1 \leq i \leq s-1$ follows as in the proof of Theorem \ref{T:theorem1}.
\end{proof}

To conclude, we provide an arithmetical characterization of which quotient pattern will appear when performing the Euclidean algorithm with $u$ and $v$ with $(v \pm 1)^2 \equiv 0 \pmod{u}$.

\begin{theorem}\label{T:degenerate}
Let $u$ be a positive integer and write $u=a b^2 $, where $a$ is the square free part of $u$. Assume $v$ with $0 < v < u$ satisfies $(v+(-1)^{\delta})^2 \equiv 0 \pmod{u}$.  Then there is an integer $c$ such that
\begin{equation*}
		v = abc + (-1)^{\delta+1}
\end{equation*}
The continued fraction expansion of $u/v$ with even length has quotient sequence fitting the first of the patterns \eqref{E:evenpatterns}  if and only if $\gcd(b,c) = a = 1$. Otherwise, it fits one of the other patterns with $x = \gcd(b,c)^2 \cdot a - 1$.  The second pattern appears if $s+\delta$ is even, and the third if $s+\delta$ is odd.  In all cases, $q_0$, \ldots, $q_s$ is the quotient sequence of the continued fraction expansion of $b/c$
\end{theorem}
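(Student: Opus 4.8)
The plan is to derive everything from Theorems \ref{T:theorem1} and \ref{T:theorem4}, which already describe $\EA(u,v,0)$ completely for the inputs in question; the only genuinely new ingredient is a short divisibility argument, after which the work is recognizing the quotient sequences produced by those theorems among the forms \eqref{E:evenpatterns}. First I would produce $c$. Set $w=v+(-1)^{\delta}$, so that \eqref{E:squarecongruence} reads $ab^{2}\mid w^{2}$. For each prime $p$, writing $v_{p}$ for the $p$-adic valuation, we have $v_{p}(a)+2v_{p}(b)\le 2v_{p}(w)$; since $a$ is squarefree, $v_{p}(a)\in\{0,1\}$, and in either case this inequality forces $v_{p}(a)+v_{p}(b)\le v_{p}(w)$. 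Hence $ab\mid w$, say $w=abc$, and then $v=w-(-1)^{\delta}=abc+(-1)^{\delta+1}$.

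Next I would clear the common factor and split on the size of the squarefree part. Put $g=\gcd(b,c)$ and write $b=gb_{0}$, $c=gc_{0}$ with $\gcd(b_{0},c_{0})=1$; then $u=(ag^{2})b_{0}^{2}$ and $v=(ag^{2})b_{0}c_{0}+(-1)^{\delta+1}$. If $ag^{2}=1$, which is exactly the case $\gcd(b,c)=a=1$, then $u=b_{0}^{2}$ and $v=b_{0}c_{0}\pm 1$ with $\gcd(b_{0},c_{0})=1$, and Theorem \ref{T:theorem1} applies, the sign being $+$ or $-$ according as $\delta=1$ or $\delta=0$; the handful of inputs with $c_{0}\le 1$ or $b_{0}=c_{0}$ fall outside its hypotheses and I would check these directly. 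If instead $ag^{2}\ge 2$, then Theorem \ref{T:theorem4} applies with its parameters taken to be $(ag^{2},b_{0},c_{0})$ and $k$ chosen so that $(-1)^{k}=(-1)^{\delta+1}$. In both cases we obtain the entire sequence of quotients of $\EA(u,v,0)$, expressed through the partial quotients $q_{1},\dots,q_{s}$ of $\EA(b_{0},c_{0})$, which are by definition the partial quotients of the continued fraction of $b_{0}/c_{0}=b/c$.

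Finally I would read the patterns off. When $ag^{2}=1$, Theorem \ref{T:theorem1} gives the quotient sequence $q_{1},\dots,q_{s-1},\,q_{s}+\epsilon,\,q_{s}-\epsilon,\,q_{s-1},\dots,q_{1}$ for a suitable $\epsilon\in\{1,-1\}$; absorbing $\epsilon$ into the second-to-middle entry exhibits this as the first of the patterns \eqref{E:evenpatterns}, and no $x$-term appears. This also gives the ``only if'' direction, since the other two patterns carry an extra pair of entries that cannot be deleted when $ag^{2}-1\ne 0$. When $ag^{2}\ge 2$, Theorem \ref{T:theorem4} inserts, between the two palindromic halves $q_{1},\dots,q_{s}$ and $q_{s},\dots,q_{1}$, either the block $1,\,ag^{2}-1$ or the block $ag^{2}-1,\,1$ (together with the usual $\mp 1$ adjustment of the adjacent $q_{s}$), which are precisely the third and the second of the patterns with $x=ag^{2}-1=\gcd(b,c)^{2}a-1$. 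Which block occurs is dictated by the sign $(-1)^{s+k}$ in Theorem \ref{T:theorem4}: the value $-1$ yields the third pattern, $+1$ the second. Since $k\equiv\delta+1\pmod 2$ and, in the zero-based indexing $q_{0},\dots,q_{s}$ of the expansion of $b/c$ used in the statement, the number of partial quotients is one more than the $s$ of Theorems \ref{T:theorem1} and \ref{T:theorem4}, the condition $(-1)^{s+k}=+1$ is the same as ``$s+\delta$ even'', and the case analysis closes.

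The main obstacle is clerical rather than conceptual: the argument is an exercise in aligning three sign/parity conventions --- the choice $\delta$ of which of $(v\pm 1)^{2}\equiv 0$ holds, the parity of the length $s$, and the offset between the one-based indexing of Theorems \ref{T:theorem1} and \ref{T:theorem4} and the zero-based indexing of the expansion of $b/c$ appearing in the statement --- while keeping in view the degenerate small cases ($c=1$, $b=c$, $v=1$), which lie outside the hypothesis $b>c>1$ of the theorems being invoked and must be verified by hand.
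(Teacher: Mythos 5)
Your proposal is correct and follows essentially the same route as the paper: extract $c$ from the divisibility $ab \mid v+(-1)^{\delta}$, reduce to the coprime case via $\tilde{a}=a\gcd(b,c)^2$, $\tilde{b}=b/\gcd(b,c)$, $\tilde{c}=c/\gcd(b,c)$, and then invoke Theorems \ref{T:theorem1} and \ref{T:theorem4}. The only difference is one of detail --- the paper's proof ends with ``now follows from Theorem \ref{T:theorem1} and Theorem \ref{T:theorem4},'' whereas you carry out the valuation argument, the pattern identification, and the parity/indexing bookkeeping explicitly, and you also flag the degenerate inputs outside the hypothesis $b>c>1$ that the paper passes over in silence.
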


\begin{proof}
By assumption, there exists some integer $w$ such that $(v + (-1)^{\delta})^2 = uw$.  Consideration of prime factorizations shows that $a$ is also the square free part of $w$, say $w = a c^2$.  Then $v = abc + (-1)^{\delta+1}$.

 If $\gcd(b,c)=d$ and we set $\tilde{a} = a d^2$, $\tilde{b} = \tfrac{b}{d}$, and $\tilde{c} = \tfrac{c}{d}$, then 
\begin{equation*}
	u =  \tilde{a} \tilde{b}^2, \quad v = \tilde{a} \tilde{b} \tilde{c} + (-1)^{\delta + 1}, \quad \text{and $\gcd (\tilde{b}, \tilde{c}) = 1$.}
\end{equation*}
Theorem \ref{T:degenerate} now follows from Theorem \ref{T:theorem1} and Theorem \ref{T:theorem4}.
\end{proof}

\end{document}